\setlist[itemize]{noitemsep,nolistsep}
\setlist[enumerate]{noitemsep,nolistsep}
\let\mathcal\mathscr
\def\C{{\bf C}}
\def\P{{\bf P}}
\def\phi{\varphi}
\def\cF{\mathcal{F}}
\def\cO{\mathcal{O}}
\def\cG{\mathcal{G}}
\def\cX{\mathcal{X}}
\def\lra{\longrightarrow}
\def\llra{\hbox to 10mm{\rightarrowfill}}
\def\lllra{\hbox to 15mm{\rightarrowfill}}
\def\llla{\hbox to 10mm{\leftarrowfill}}
\def\lllla{\hbox to 15mm{\leftarrowfill}}
\def\hra{\hookrightarrow}
\def\lhra{\ensuremath{\lhook\joinrel\relbar\joinrel\rightarrow}}
\def\isom{\simeq}
\def\k{{\C}}
\DeclareMathOperator{\codim}{codim}
\DeclareMathOperator{\Hom}{Hom}
\DeclareMathOperator{\length}{length}
\DeclareMathOperator{\Pic}{Pic}
\DeclareMathOperator{\pr}{{\mathsf{pr}}}
\def\llra{\hbox to 10mm{\rightarrowfill}}
\def\lllra{\hbox to 15mm{\rightarrowfill}}
\def\subset{\subseteq}
\newtheorem{lemm}{Lemma}
\newtheorem{theo}[lemm]{Theorem}
\newtheorem{coro}[lemm]{Corollary}
\newtheorem{prop}[lemm]{Proposition}
\theoremstyle{definition}
\newtheorem{rema}[lemm]{Remark}
\newtheorem{conj}[lemm]{Conjecture}
\theoremstyle{remark}
\newtheorem*{remark*}{Remark}
\newtheorem*{note*}{Note}
\def\rr{n}
\def\kk{r}
\begin{document}
\title{On a conjecture of  Kazhdan and Polishchuk}

\author[O.\ Debarre]{Olivier Debarre}
\address{Universit\'e   Paris Cit\'e and Sorbonne Universit\'e, CNRS,   IMJ-PRG, F-75013 Paris, France}
 \email{{\tt olivier.debarre@imj-prg.fr}}


  \subjclass[2020]{14H60, 14D20, 14H40}
 \keywords{Vector bundles, curves, Brill--Noether loci.}
\thanks{This project has received funding from the European
Research Council (ERC) under the European
Union's Horizon 2020 research and innovation
programme (ERC-2020-SyG-854361-HyperK)}

  \begin{abstract}
We discuss a conjecture made by Alexander Polishchuk and David Kazhdan at the 2022 ICM about a variety naturally attached to any stable vector bundle of rank 2  and degree $2g- 1$ on a smooth projective complex curve of genus $g$.
  \end{abstract}


\maketitle 

\section{The conjecture}\label{s1}

 The aim of this note is to discuss   the following conjecture made by Alexander Polishchuk and David Kazhdan.\ It appeared in the survey paper~\cite{bk} as Conjecture~3.6.\ Everything is over   the field of complex numbers.

\begin{conj}\label{c2}
 Let $C$ be a smooth projective curve of genus $g\ge 2$ and let $E$ be a stable vector bundle on $C$ of rank 2  and degree $2g- 1$.\ 
Let 
\begin{equation}\label{deff}
\cF_E\coloneqq \{ ([L],s) \mid [L]\in\Pic^0(C),\ s\in \P(H^0(C,  E\otimes L))\}.
\end{equation}
Then,
\begin{itemize}
\item[(a)] $\cF_E$ is irreducible,
\item[(b)]  $\dim (\cF_E) = g$,
\item[(c)]  $\cF_E$ has rational singularities.
\end{itemize}
\end{conj}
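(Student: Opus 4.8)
\emph{Sketch of a proof strategy.}

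\emph{Reformulation.} For every $[L]\in\Pic^0(C)$ one has $\chi(E\otimes L)=(2g-1)+2(1-g)=1$, hence $h^0(C,E\otimes L)\ge1$, so the projection $\cF_E\to\Pic^0(C)$ is surjective with fibres $\P(H^0(E\otimes L))\cong\P^{\,h^0(E\otimes L)-1}$. A nonzero section $s\in H^0(E\otimes L)$ is the same as an injection $L^{-1}\hookrightarrow E$ (injective because $E$ is torsion free), and $([L],[s])$ is determined by, and determines, its image subsheaf $\cK\subseteq E$, a line subsheaf of degree $0$. Thus $\cF_E$ is naturally the Quot scheme of quotients of $E$ of rank $1$ and degree $2g-1$ (equivalently, of line subsheaves of degree $0$); in particular it is projective, and, being proper over the connected variety $A\coloneqq\Pic^0(C)$ with connected fibres, it is connected. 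Moreover, after twisting $E$ by a Poincar\'e bundle on $C\times A$ and choosing an auxiliary effective divisor $D_0$ of degree $\ge g-1$ — so that $(E\otimes L)(D_0)$, being stable of slope $>2g-2$, has vanishing $H^1$ — one obtains a two-term complex $\phi\colon\cK^0\to\cK^1$ of vector bundles on $A$ with $\rk\cK^0-\rk\cK^1=\chi(E\otimes L)=1$, $\Ker\phi_{[L]}=H^0(E\otimes L)$, $\Coker\phi_{[L]}=H^1(E\otimes L)$, and $\cF_E$ is identified with the zero scheme $Z(\sigma)$ in the smooth variety $\P(\cK^0)$ of the section $\sigma$ of $\pi^*\cK^1\otimes\cO(1)$ built from $\pi^*\phi$ and the tautological inclusion $\cO(-1)\hookrightarrow\pi^*\cK^0$. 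As this bundle has rank $\rk\cK^1=\dim\P(\cK^0)-g$, \emph{every irreducible component of $\cF_E$ has dimension $\ge g$}.

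\emph{The smooth locus and the key estimates.} Let $\cF_E^{\circ}\subseteq\cF_E$ be the open set of \emph{non-degenerate} points, where $\cK$ is saturated, i.e.\ the quotient $Q\coloneqq E/\cK$ is a line bundle (of degree $2g-1$), i.e.\ $s$ vanishes nowhere. There the tangent and obstruction spaces of the Quot scheme are $\Hom(\cK,Q)=H^0(Q\otimes\cK^{-1})$ and $\Ext^1(\cK,Q)=H^1(Q\otimes\cK^{-1})$, and the latter vanishes because $\deg(Q\otimes\cK^{-1})=2g-1>2g-2$; hence $\cF_E^{\circ}$ is smooth of pure dimension $\chi(Q\otimes\cK^{-1})=g$. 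Its complement is stratified by the degree $t$ of the zero divisor of $s$ (equivalently the length of the torsion of $Q$), with $1\le t\le g-1$ — the upper bound because stability of $E$ forces line subsheaves to have degree $<(2g-1)/2$ — and the stratum of type $t$ fibres over $S_t(E)\times C^{(t)}$, where $S_t(E)$ is the variety of saturated line subsheaves of $E$ of degree $t$. Everything then hinges on the following \emph{Brill--Noether estimates}, which I regard as the heart of the matter: writing $W^r\coloneqq\{[L]\in\Pic^0(C):h^0(E\otimes L)\ge r+1\}$, (i)~$W^r$ has codimension $\ge r(r+1)$ in $A$ for every $r\ge1$ — i.e.\ every degeneracy locus of $\phi$ has its expected codimension — and (ii)~$\dim S_t(E)+t<g$ for every $t\ge1$, and the section of $E\otimes L$ is nowhere zero for general $[L]$. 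By Serre duality these are Brill--Noether statements about $F\coloneqq K_C\otimes E^\vee$, stable of rank $2$ and degree $2g-3$ (one has $h^0(E\otimes L)=1+h^0(F\otimes L^{-1})$, so $W^r\cong\{[M]:h^0(F\otimes M)\ge r\}$). The natural attack is deformation theory plus Clifford's inequality for semistable bundles: at a point of $S_t(E)$ with quotient a line bundle $Q'$ of degree $2g-1-t$, the Quot scheme has local dimension at most $h^0(Q'\otimes\cK^{-1})=(g-2t)+h^1(Q'\otimes\cK^{-1})$, and $h^1(Q'\otimes\cK^{-1})=h^0(K_C\otimes\cK\otimes Q'^{-1})$ is the number of sections of a \emph{special} line bundle of degree $2t-1$, which Clifford bounds by $t$; this already gives the crude $\dim S_t(E)\le g-t$, and likewise a first bound on $\dim W^r$. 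The real work is to sharpen these (to $\dim S_t(E)+t<g$, and from $\dim W^r\le g-r$ to $\dim W^r\le g-r(r+1)$): here stability of $E$ must be used decisively — on a hypothetical component where a Clifford bound is attained, the relevant line bundle is effective of the minimal possible degree, which determines the quotient and hence, through $\det E$, confines the subsheaf to a bounded (at most one-dimensional) family, contradicting the assumed dimension — and for curves of small gonality this drags in the classification of Clifford-extremal bundles. I expect this Brill--Noether package to be the main obstacle; it is also where the truth of the conjecture is genuinely tested.

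\emph{Conclusion.} Granting (i)--(ii): the stratum of $\cF_E$ over $W^r\smallsetminus W^{r+1}$ is a $\P^r$-bundle of dimension $\le\dim W^r+r\le g$, with equality only for $r=0$; with the lower bound above this gives $\dim\cF_E=g$, which is (b), and shows that $\sigma$ is a regular section — so $\cF_E=Z(\sigma)$ is a local complete intersection of pure dimension $g$, hence Cohen--Macaulay, and reduced, as it contains the dense (by (ii)) smooth locus $\cF_E^{\circ}$. For (a): $\cF_E$ being pure-dimensional and $\cF_E^{\circ}$ dense, $\cF_E=\overline{\cF_E^{\circ}}$, so it suffices that the smooth variety $\cF_E^{\circ}$ be irreducible; a component of $\cF_E^{\circ}$ not dominating $A$ would, over its image $A'\subseteq W^{h-1}$ (with $h\ge2$ the general value of $h^0(E\otimes L)$ there), have fibres of dimension $g-\dim A'\le h-1$, forcing $\dim A'\ge g-h+1>g-h(h-1)\ge\dim W^{h-1}$, which is absurd; so $\cF_E^{\circ}\to A$ is dominant with a single reduced point as general fibre (using that the general section is nowhere zero), whence $\cF_E^{\circ}$, and therefore $\cF_E$, is irreducible. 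For (c): the singular locus lies in $\cF_E\smallsetminus\cF_E^{\circ}$ — the obstructed points of the Quot scheme — and one should show, by analysing the Kuranishi model there (or, since by (i) the degeneracy loci of $\phi$ have their expected codimension, by running Kempf's Grassmann-bundle construction), that $\cF_E$ is \'etale-locally a product of a smooth factor with a generic determinantal variety, which has rational singularities; this gives (c). Carrying out this last local (or resolution-theoretic) analysis is the second substantial step, after the Brill--Noether estimates.
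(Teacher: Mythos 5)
Your setup coincides with the paper's: the two-term complex realizing $\cF_E$ as the zero locus of a section of a vector bundle of rank $\dim\P(\cK^0)-g$ on $\P(\cK^0)$, hence with every component of dimension $\ge g$ and a local complete intersection wherever the dimension is exactly $g$, is Proposition~\ref{prop31}, and the reduction of (a) and (b) to codimension bounds for the loci $W^r=\{[L]: h^0(E\otimes L)\ge r+1\}$ (the loci $B_r(E)$ of the paper) is exactly how Section~\ref{s3} begins. The problem is that you stop precisely where the paper's proof starts. Everything rests on your estimates (i) and (ii), which you do not prove --- you yourself call them ``the main obstacle'' --- and the crude bounds you do extract from deformation theory and Clifford's inequality ($\codim W^r\ge r$ and $\dim S_t(E)\le g-t$) are each exactly one short of what your concluding argument needs ($\dim W^r+r<g$ for $r\ge1$, and $\dim S_t(E)+t<g$). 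The paper's actual content is the proof of $\codim B_k(E)\ge k+1$: for $k=1$ this is a theorem of Brivio--Verra (Lemma~\ref{l32}); for $k=2$ it is a genuinely new argument (Lemma~\ref{l33}) combining difference maps, the Mumford--Kempf structure theorem for irreducible hypersurfaces in abelian varieties, a lower bound for the degree of a curve generating an abelian variety, and a secant-type result of Brivio--Verra; and the cases $k\ge3$ are reduced to $k-2$ via the inclusion $B_k(E)+C-C\subseteq B_{k-2}(E)$ and a connectedness theorem for sums of subvarieties of abelian varieties (Lemma~\ref{l31}). None of these ingredients appears in your sketch, so the proof of (a) and (b) is not there; your ``decisive use of stability'' paragraph is a hope, not an argument.

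Moreover, your estimate (i) as stated --- $\codim W^r\ge r(r+1)$ for all $r\ge1$, i.e.\ all degeneracy loci in their expected codimension --- is false in general: the paper notes that on any hyperelliptic curve (and on any curve of genus $3$) there exist stable $E$ of rank $2$ and degree $2g-1$ with $\dim B_k(E)=g-k-1$ for every $k$, so that $\codim B_2(E)=3<6$. Your irreducibility argument happens to survive with the correct weaker bound $\codim W^{h-1}\ge h$, but your route to (c) does not: realizing $\cF_E$ \'etale-locally as a product of a smooth factor with a \emph{generic} determinantal variety requires exactly the expected-codimension statement that fails, and the paper also exhibits $E$ for which $B_1(E)$ has a nonreduced component, so that $\cF_E$ is singular in codimension $2$ and is certainly not generically determinantal there. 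Be aware that part (c) is not proved in the paper either: only (a), (b), normality and the local complete intersection property are established in general (Theorem~\ref{mth}), with the full conjecture obtained only for $g=2$; so (c) should not be presented as the outcome of a routine local analysis.
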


When $E$ is a {\em general} stable vector bundle on $C$ of rank $2$  and degree~$2g-1$, the scheme~$\cF_E$ is smooth irreducible of dimension $g$ (Proposition~\ref{prop22}).\ The aim of this note is to prove items~(a) and~(b) above for all  stable $E$  and all  $g\ge2$.

\begin{theo}\label{mth}
 Let $C$ be a smooth projective curve of genus $g\ge 2$ and let $E$ be a stable vector bundle on $C$ of rank~$2$  and degree $2g- 1$.\ 
Then
 $\cF_E $
  is irreducible, normal, and a local complete intersection of dimension~$g$.
 \end{theo}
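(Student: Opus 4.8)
The plan is to realize $\cF_E$ as a degeneracy locus inside a projective bundle over $\Pic^0(C)$ and then use the structure of such loci to control dimension, irreducibility, and normality simultaneously. Concretely, fix a Poincaré bundle $\cP$ on $C\times\Pic^0(C)$ and set $\cE \coloneqq (\pr_C^*E)\otimes \cP$, a rank-$2$ family on $C\times\Pic^0(C)$. For $[L]\in\Pic^0(C)$ the fiber is $E\otimes L$, which has $\chi(C,E\otimes L) = \deg(E\otimes L) + 2(1-g) = (2g-1) + 2 - 2g = 1$. Hence after twisting $\cP$ by a sufficiently ample line bundle pulled back from $\Pic^0(C)$ and pushing forward along $\pr_{\Pic^0(C)}$, one obtains a two-term complex of vector bundles $[\cK_0 \xrightarrow{\ \phi\ } \cK_1]$ on $\Pic^0(C)$ whose cohomology computes $R^\bullet\pr_*\cE$ fiberwise; since $h^0 - h^1 = 1$, we may arrange $\rk\cK_0 = \rk\cK_1 + 1$. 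Then $\cF_E$ sits inside $\P(\cK_0)$ (a $\P^{N-1}$-bundle over $\Pic^0(C)$, $N = \rk\cK_0$) as the zero locus of the induced section of $\pr^*\cK_1(1)$: a point of $\cF_E$ over $[L]$ is a line in $(\cK_0)_{[L]}$ landing in $\ker\phi_{[L]} = H^0(C,E\otimes L)$.

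From this description, part (b) is essentially the expected-dimension count: $\P(\cK_0)$ has dimension $g + N - 1$, and we are cutting by $\rk\cK_1 = N-1$ equations (the vanishing of a section of a rank-$(N-1)$ bundle), so every component of $\cF_E$ has dimension $\ge (g + N - 1) - (N-1) = g$. The reverse inequality $\dim\cF_E \le g$ is where stability of $E$ enters: for a stable $E$ of this rank and degree, $h^0(C,E\otimes L)\le 1$ for $[L]$ outside a proper closed subset $Z\subset\Pic^0(C)$ (otherwise one produces subsheaves violating the slope bound — this is the classical Brill–Noether argument, and the dimension of the locus where $h^0 \ge k$ is bounded by $g - k$). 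Over $\Pic^0(C)\setminus Z$ the map $\cF_E\to\Pic^0(C)$ has finite fibers, contributing dimension $\le g$; over each stratum of $Z$ where $h^0 = k$, the fibers are $\P^{k-1}$ and one checks $\dim(\text{stratum}) + (k-1) \le (g-k) + (k-1) < g$. Combining, $\dim\cF_E = g$, and since every component has dimension exactly $g$ while $\cF_E$ is cut out by the expected number of equations in a smooth variety, $\cF_E$ is a local complete intersection, hence Cohen–Macaulay and in particular has no embedded components.

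For irreducibility (part (a)), the key observation is that the "main stratum" $\cF_E^\circ \coloneqq \cF_E \times_{\Pic^0(C)} (\Pic^0(C)\setminus Z)$ is finite over an irreducible base, and in fact generically of degree one: for general $[L]$ we have $h^0(C,E\otimes L) = 1$, so the fiber is a single reduced point; moreover $\cF_E^\circ$ is smooth by Proposition~\ref{prop22} (or rather its proof, applied pointwise) since the differential of $\phi$ is surjective exactly where $h^1 = 0$. Thus $\cF_E^\circ$ is irreducible (a degree-one cover of an irreducible variety that is smooth, hence connected fibers force irreducibility; alternatively, it is birational to $\Pic^0(C)\setminus Z$). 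It then remains to show $\cF_E = \overline{\cF_E^\circ}$, i.e.\ that no component of $\cF_E$ lies entirely over $Z$. This follows from the dimension bounds in the previous paragraph: any such component would have dimension $< g$, contradicting the lci lower bound $\dim \ge g$ established above. Hence $\cF_E$ is irreducible.

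Finally, normality follows from Serre's criterion: $\cF_E$ is Cohen–Macaulay (being lci), so it suffices to verify regularity in codimension one, i.e.\ that $\Sing(\cF_E)$ has codimension $\ge 2$. The singular locus is contained in the locus where the section of $\pr^*\cK_1(1)$ fails to be transverse, which in turn is contained in the preimage of $\{[L] : h^1(C,E\otimes L) \ge 1\} = \{[L] : h^0(C,E\otimes L)\ge 2\}$; by the Brill–Noether bound this locus in $\Pic^0(C)$ has dimension $\le g - 2$, and its preimage in $\cF_E$ has dimension $\le (g-2) + \max(k-1)$ over the stratum where $h^0 = k$, which one checks is $\le g - 2$ (the same inequality $\dim(\text{stratum}) + (k-1) \le g - 2$ for $k \ge 2$, using $\dim\{h^0 = k\} \le g - k$). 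So $\codim_{\cF_E}\Sing(\cF_E)\ge 2$, and $\cF_E$ is normal.

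The main obstacle I anticipate is making the Brill–Noether dimension estimates genuinely sharp and uniform in $[L]$ — in particular establishing that $\{[L]\in\Pic^0(C) : h^0(C,E\otimes L)\ge k\}$ has dimension at most $g-k$ for every stable $E$ of rank $2$ and degree $2g-1$, and that this locus is nonempty only in the expected range. The clean statement uses that $E\otimes L$ has slope $g - \tfrac12$, so a section gives an injection $\cO_C \hookrightarrow E\otimes L$ whose cokernel is a line bundle of degree $2g-1$; a pencil of sections either gives a sub-line-bundle of $E\otimes L$ of degree $\ge g$ (contradicting stability, since $\mu(E\otimes L) = g - \tfrac12 < g$) or forces the sections to share a base structure, and one tracks these possibilities against the Picard dimension. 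This is where the hypotheses (rank exactly $2$, degree exactly $2g-1$, $E$ stable) are really used, and getting the bookkeeping right — rather than any deep new idea — is the crux.
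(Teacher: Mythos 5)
Your framework---realizing $\cF_E$ inside $\P(\cK_0)$ as the zero locus of a section of a rank-$(N-1)$ bundle, deducing $\dim\ge g$ everywhere and the lci property where equality holds---is exactly the paper's Proposition~\ref{prop31}, and your reduction of irreducibility to showing that no component lies over the jumping locus is the paper's reduction to the inequalities~\eqref{bk}. The gap is that those inequalities, namely $\dim\{[L]: h^0(C,E\otimes L)\ge k\}\le g-k$, are the entire technical content of the theorem, and you assert them as ``the classical Brill--Noether argument.'' They are not a formal consequence of stability. The general theory of degeneracy loci gives only an \emph{upper} bound on the codimension of these loci (the expected codimension $\kk(\kk+1)$ for $B_\kk(E)$), never a lower bound; and there is no slope obstruction to $h^0\ge 2$: two sections of a rank-$2$ bundle with $s_1\wedge s_2\ne 0$ generate no destabilizing sub-line bundle, and indeed $B_1(E)$ is \emph{always} nonempty, of class $2\theta^2$. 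The codimension-$2$ bound for $B_1(E)$ is a theorem of Brivio--Verra (Lemma~\ref{l32}), and the codimension-$3$ bound for $B_2(E)$ (Lemma~\ref{l33}) is the hardest step of the paper, requiring Mumford's structure theorem for hypersurfaces in abelian varieties, translation-invariance arguments, and degree bounds for curves in abelian varieties. Your closing paragraph correctly identifies this as ``the main obstacle,'' but the sketch you offer there (a pencil forces a degree-$\ge g$ sub-line bundle) is simply false, so the obstacle is not overcome.

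The normality step has an independent error. You bound $\Sing(\cF_E)$ by the preimage of $\{h^0\ge 2\}=B_1(E)$ and claim this has dimension $\le g-2$; but with $\dim\{h^0=k\}\le g-k$ and $\P^{k-1}$ fibers the count is $(g-k)+(k-1)=g-1$, and since $B_1(E)$ genuinely has codimension exactly $2$ with $\P^1$ fibers over it, this preimage is an honest divisor in $\cF_E$. Serre's criterion therefore cannot be applied to that containment: one must show that $\cF_E$ is in fact smooth at the generic point of this divisor (it is the blowup of $\Pic^0(C)$ along $B_1(E)$, smooth wherever the center is), which forces a finer analysis of the non-transversality locus. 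This is what the paper's Lemma~\ref{l34} does, via the loci $B_{r,d}(E)$ involving the auxiliary line bundle $M_E\otimes (L^\vee)^{\otimes 2}(D)$ and Clifford's theorem, combined with \cite[Lemma~4.9]{kapo}. Both the Brill--Noether codimension bounds and the identification of the true singular locus are missing from your argument, so the proposal does not establish the theorem.
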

 

For any   $\rr\ge 3$ and $g\ge2$, we  construct, on any  hyperelliptic  curve  of genus $g$, stable vector bundles $E$   of rank $\rr$  and degree $\rr (g-1)+ 1$  for which the scheme $\cF_E $
  is  reducible (see Remark~\ref{r23}).\ So the natural extension of the conjecture to stable  vector bundles of higher ranks fails in general (although it holds for $E$ general by Proposition~\ref{prop22}).
  
 After a first version of this note was written, I sent it to Alexander Braverman, who directed me to the eprint~\cite{kapo}, where some  progress is made on   Conjecture~\ref{c2}   (called Conjecture~B in that paper).\ In particular, the   conjecture is proved there for any curve of genus~$2$ (\cite[Theorem~4.20]{kapo}) and any nonhyperelliptic curve of genus 3 (\cite[Theorem~4.34]{kapo}).\ This first version of the present note also contained a proof of the conjecture for~$g=2$ but, since it was essentially the same as in \cite[Theorem~4.20]{kapo}, I decided   to include instead in this second version a slightly different proof that makes use of Theorem~\ref{mth}.
 
     Conjecture~C(i) in \cite{kapo} is our Lemma~\ref{l32} (proved by Brivio and Verra) and   Conjecture~C(ii) is our Lemma~\ref{l34}.\   Conjecture~C(ii) implies, by  \cite[Lemma~4.9]{kapo}, the  normality of~$\cF_E$.

\section{Properties of the scheme $\cF_E$ and the conjecture for general $E$}\label{s2}

In this section,  $C$ is  a smooth projective curve   of genus $g\ge 2$ and  $E$ is a  vector bundle on $C$ of positive rank $\rr $  and degree $d\coloneqq \rr (g-1)+ 1$.\ The set~$\cF_E$ was defined in~\eqref{deff}.
 
\subsection{General  properties of the scheme $\cF_E$}\label{se21}
 For all $[L]\in \Pic^0(C)$, one has $h^0(C,  E\otimes L)\ge \chi(C,  E\otimes L)=\chi(C,E)=1$, hence the first projection
$$\pr_1\colon \cF_E \lra \Pic^0(C) $$
is dominant.\ Its fibers are projective spaces, hence  $\cF_E$ is connected.\

Using a standard argument, we define a scheme structure on~$\cF_E$  and show that it has dimension at least $g$ at every point.  

\begin{prop}\label{prop31}
  Let $C$ be a smooth projective curve of genus $g\ge 2$ and let $E$ be a  vector bundle on~$C$ of rank $\rr $  and degree $\rr (g-1)+ 1$.\ The scheme $\cF_E$ has everywhere dimension at least $g$ and it is a local complete intersection at every point where its dimension is exactly $g$.
\end{prop}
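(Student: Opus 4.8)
The plan is to exhibit $\cF_E$ as the zero locus of a section of a vector bundle on a projective bundle over $P\coloneqq\Pic^0(C)$; this simultaneously furnishes the scheme structure and makes the local complete intersection statement transparent. Fix a Poincar\'e line bundle $\cP$ on $C\times P$, let $p\colon C\times P\to C$ and $q\colon C\times P\to P$ denote the projections, and put $\cE\coloneqq p^*E\otimes\cP$. Choose an effective divisor $D$ on $C$ of degree $e$ large enough that $H^1(C,(E\otimes L)(D))=0$ for every $[L]\in P$ (such an $e$ can be taken uniformly in $[L]$, e.g.\ by Serre duality, since tensoring $E$ by a degree-zero line bundle does not change its Harder--Narasimhan slopes). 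Then cohomology and base change show that
\[
\cA\coloneqq q_*\cE(D\times P)\qquad\text{and}\qquad \cB\coloneqq q_*\bigl(\cE(D\times P)|_{D\times P}\bigr)
\]
are locally free on $P$, of respective ranks $a\coloneqq\chi(C,E)+\rr e=1+\rr e$ and $b\coloneqq\rr e$; the equality $\chi(C,E)=1$ --- equivalently $\deg E=\rr(g-1)+1$ --- is precisely what forces $a-b=1$. Restriction of sections to $D$ gives a morphism of vector bundles $\alpha\colon\cA\to\cB$ whose fibre over $[L]\in P$ is the map $H^0(C,(E\otimes L)(D))\to H^0(D,(E\otimes L)(D)|_D)$, so that $\ker(\alpha_{[L]})=H^0(C,E\otimes L)$.

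I would then move to the projective bundle $\pi\colon\P(\cA)\to P$ parametrising one-dimensional subspaces of the fibres of $\cA$, with tautological inclusion $\cO_{\P(\cA)}(-1)\hookrightarrow\pi^*\cA$. Composing with $\pi^*\alpha$ produces a section $\sigma$ of the rank-$b$ bundle $\cO_{\P(\cA)}(1)\otimes\pi^*\cB$, and a point of $\P(\cA)$ over $[L]$ lies in the zero scheme $Z(\sigma)$ exactly when the corresponding line is contained in $\ker(\alpha_{[L]})=H^0(C,E\otimes L)$. Hence $Z(\sigma)$ coincides with $\cF_E$ as a set, and I take it to be the definition of the scheme structure on $\cF_E$. (That this is independent of the choices of $D$ and $\cP$ is checked in the usual way, and in any event is irrelevant to the present statement, whose conclusions are local on $\cF_E$.)

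The proposition then follows from a dimension count. The variety $\P(\cA)$ is smooth of dimension $\dim P+(a-1)=g+\rr e$, and $\sigma$ is a section of a vector bundle of rank $b=\rr e$, so locally near any point $x\in Z(\sigma)$ the scheme $Z(\sigma)$ is cut out in $\P(\cA)$ by $\rr e$ equations. Krull's principal ideal theorem gives $\dim_x\cF_E\ge (g+\rr e)-\rr e=g$ at every point, which is the first assertion. If in addition $\dim_x\cF_E=g$, then these $\rr e$ equations lower the dimension of the regular local ring $\cO_{\P(\cA),x}$ (of dimension $g+\rr e$) by exactly $\rr e$, hence form a regular sequence, so $\cO_{\cF_E,x}$ is a complete intersection; this is the second assertion.

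I do not expect a genuine obstacle --- this is the ``standard argument'' alluded to in the text --- and the real content is the elementary fact that a degeneracy locus of a morphism of vector bundles cannot have codimension larger than the expected one. The only points requiring care are the uniform choice of $e$ over $P$, the bookkeeping of ranks, and (if one wants it) the independence of the scheme structure from the auxiliary data; the structurally decisive input is the numerical identity $a-b=1$, which expresses that the fibre of $\pr_1\colon\cF_E\to P$ loses exactly one dimension each time $h^0(E\otimes L)$ jumps by one, so that the expected dimension of $\cF_E$ is $g$.
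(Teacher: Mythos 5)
Your proposal is correct and follows essentially the same route as the paper: both realize $\cF_E$ inside the projectivization of a bundle of twisted sections as the zero locus of the tautological section associated with the restriction map $\alpha\colon\cA\to\cB$ of ranks $m+1$ and $m$, and both conclude by the same codimension/regular-sequence count (the paper packages this via the universal incidence variety $\cX\subset\Hom(\k^{m+1},\k^m)\times\P(\k^{m+1})$ rather than via a section of $\cO_{\P(\cA)}(1)\otimes\pi^*\cB$, but these are the same construction). The only cosmetic difference is your choice of an auxiliary divisor of large degree justified by Harder--Narasimhan slopes, where the paper takes $\deg D=g-1$ and invokes stability; your variant has the small advantage of working verbatim for the arbitrary (not necessarily stable) $E$ allowed in the statement.
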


\begin{proof}
Let $D$ be an effective divisor on $C$ of  degree $  g-1$.\ By Serre  duality, we have 
$$H^1(C,   E\otimes L(D))\isom H^0(C,  \omega_C\otimes E^\vee \otimes L^\vee(-D))^\vee=\Hom(L(D) ,\omega_C\otimes E^\vee)$$ and the latter space vanishes  for all $[L]\in\Pic^0(C)$  by stability of $\omega_C\otimes E^\vee$.\ When $[L]$ varies in~$\Pic^0(C)$, the vector spaces $H^0(C,E\otimes L(D))$ are then the fibers of a vector bundle~$\cF$ on~$\Pic^0(C)$ of rank $\chi(C, E(D))=1+\rr (g-1)$, whereas 
the vector spaces~$H^0(C,  E\otimes L(D)\vert_D)$ are the fibers of a vector bundle $\cG$ on $\Pic^0(C)$ of rank $m\coloneqq \rr (g-1)$.\ The kernel at a point $[L]$ of the  restriction morphism
\begin{equation}\label{defalpha}
\alpha\colon \cF\lra \cG
\end{equation}
is $H^0(C,  E\otimes L)$.\ The vector bundles $\cF$ and $\cG$ have  respective ranks $m+1$ and 
 $m$.

 Set  
$$\cX\coloneqq \{ (u,[x]) \in \Hom(\k^{m+1},\k^{m})\times \P(\k^{m+1})\mid u(x)=0 \}.$$
Looking at the projection $\cX\to \P(\k^{m+1})$, we see that $\cX$ is smooth of codimension $m$ in $\Hom(\k^{m+1},\k^{m})\times \P(\k^{m+1})$.
 
The variety $\Pic^0(C)$ is covered by open subsets $U$ with morphisms  to $\Hom(\k^{m+1},\k^{m})$ such that $$\pr_1^{-1}( U)=U\times_{\Hom(\k^{m+1},\k^{m})}\cX.$$
This defines a scheme structure on~$\cF_E$   that makes it into  a subscheme of $\P(\cF)$.

Since $\cF_E$ is not empty, it  follows that it has everywhere codimension at most~$m$ in~$\P(\cF)$ and that it is a local complete intersection at every point where its codimension is exactly~$m$.
\end{proof}

\subsection{Brill--Noether loci}\label{bnl}

Let $\kk $ be a nonnegative integer.\ The Brill--Noether locus
\begin{equation}\label{defbk}
B_\kk (E)\coloneqq \{ [L]\in\Pic^0(C)\mid h^0(C, E\otimes L )\ge \kk +1\}
\end{equation}
 is the degeneracy subscheme of $\Pic^0(C) $ where the morphism $\alpha\colon \cF\to \cG$ defined in~\eqref{defalpha} has corank $\ge \kk $; therefore, it is everywhere of codimension $\le \kk (\kk +1)$ in $ \Pic^0(C)$.\ These loci were studied by Ghione in~\cite{ghi} and Lazarsfeld in \cite[\S~2]{laz}: they prove  (in our situation) that~$B_\kk (E)$ is nonempty when $\kk (\kk +1)\le g$,  connected when $\kk (\kk +1)< g$, and that when $B_\kk (E)$ is everywhere of codimension $ \kk (\kk +1)$ in $ \Pic^0(C)$,   it is Cohen--Macaulay and its cohomology class is
 $$\Bigl( \prod_{i=0}^{\kk }\frac{i!}{(\kk +i)!}\Bigr)\ \rr ^{\kk (\kk +1)}\ \theta^{\kk (\kk +1)},$$
 where $\theta$ is the canonical principal polarization on $ \Pic^0(C)$.

\subsection{The conjecture for general vector bundles} 
Stable vector bundles $E$ considered here have  a fine moduli space $U_C(\rr ,d)$ (since $\rr $ and~$d$ are relatively prime, stability and semistability are equivalent). This is a smooth projective variety of dimension $\rr ^2(g-1)+1$.\ When one fixes the determinant, one obtains a smooth projective   irreducible fine moduli space $SU_C(\rr ,d)\subset  U_C(\rr ,d)$.

 For pairs $(E,V)$, where $E$ is a vector bundle   on $C$ and $V\subset H^0(C,E)$ a one-dimensional space of sections of~$E$, Bradlow, Thaddeus (\cite{tha}), Le Potier (\cite{lp}), and  others introduced a notion of stability depending on a real parameter $\sigma$ (\cite[Definition~2.1]{BGMN}).\  Without going into details, let us just mention that in our situation (where $\rr $ and $d$ are relatively prime), for   $\sigma$ positive and small enough, stability of the pair $(E,V)$ and  stability of the bundle $E$ are   equivalent, hence are independent of $\sigma$ (\cite[Proposition~2.5]{BGMN}).\ The corresponding moduli space for  stable pairs $(E,V)$, which we denote by $G_C(\rr ,d)$,\footnote{It is denoted by $G_0(\rr ,d,1)$, or simply $G_0$, in \cite[Section~2.1]{BGMN}.} is then smooth irreducible of  dimension $\rr ^2(g-1)+1$ (\cite[Theorem~7.1]{BGMN}).\ The forgetful map 
\begin{equation*}
\rho\colon G_C(\rr ,d)  \lra U_C(\rr ,d)
\end{equation*}
is a  dominant birational morphism.

The next easy proposition shows that the natural extension of Conjecture~\ref{c2}  to the case of stable vector bundles   of any    rank $\rr >0$  and degree $\rr (g-1)+ 1$   holds when the vector bundle $E$ is {\em general}  in the moduli space $U_C(\rr ,\rr (g-1)+ 1)$.

\begin{prop}\label{prop22}
Let $C$ be a smooth projective curve of genus $g\ge 2$ and let $E$ be a {\em general} stable vector bundle on $C$ of rank $\rr $  and degree $\rr (g-1)+ 1$.\ The scheme $\cF_E$ is smooth irreducible of dimension $g$.
\end{prop}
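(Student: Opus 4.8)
The plan is to realize the schemes $\cF_E$, as $[E]$ ranges over the moduli space $U_C(\rr ,d)$, as the fibers of a single morphism whose source is a smooth irreducible variety, and then to apply generic smoothness (we work over $\C$).

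First I would introduce the morphism
\[
\Phi\colon G_C(\rr ,d)\times\Pic^0(C)\lra U_C(\rr ,d),\qquad \bigl((F,V),[L]\bigr)\longmapsto [F\otimes L].
\]
It is well defined because $F\otimes L$ is again stable of rank $\rr $ and degree $d$, and it is a morphism because, \'etale-locally on $G_C(\rr ,d)\times\Pic^0(C)$, one can twist a universal bundle on $C\times G_C(\rr ,d)$ by a Poincar\'e bundle on $C\times\Pic^0(C)$, obtaining families of stable bundles and hence classifying morphisms to the fine moduli space $U_C(\rr ,d)$ that glue.\ Restricting to $[L]=[\cO_C]$ recovers the forgetful map $\rho$, which is dominant, so $\Phi$ is dominant.\ Its source is smooth and irreducible of dimension $\rr ^2(g-1)+1+g$, and its target is smooth of dimension $\rr ^2(g-1)+1$.

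Next I would identify the fiber of $\Phi$ over $[E]$ with $\cF_E$.\ On points, $\Phi^{-1}([E])$ consists of the pairs $\bigl((F,V),[L]\bigr)$ with $F\otimes L\isom E$, equivalently of the pairs $([L],s)$ with $s\in\P(H^0(C,E\otimes L^\vee))$, which is exactly $\cF_E$ up to the automorphism $[L]\mapsto[L^\vee]$ of $\Pic^0(C)$.\ The delicate point is that this identification respects scheme structures.\ I would establish it by carrying out the construction of Proposition~\ref{prop31} in a family over $U_C(\rr ,d)\times\Pic^0(C)$: push forward the suitably twisted universal bundle to produce relative analogues of the bundles $\cF$, $\cG$ and of the restriction morphism $\alpha$ from \eqref{defalpha}, form the associated zero-locus inside a projective bundle, and check that the resulting scheme is canonically $G_C(\rr ,d)\times\Pic^0(C)$, with fiber over $[E]$ equal to $\cF_E$ endowed with the scheme structure of Proposition~\ref{prop31}.\ Reconciling these two moduli descriptions --- the relative incidence one and the stable-pairs one --- is the main work, and the step I expect to be the real obstacle.

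Granting this, the conclusion is immediate.\ By generic smoothness there is a dense open $\cV\subset U_C(\rr ,d)$ over which $\Phi$ is smooth, so for $[E]\in\cV$ the scheme $\cF_E\isom\Phi^{-1}([E])$ is smooth of pure dimension $\dim\bigl(G_C(\rr ,d)\times\Pic^0(C)\bigr)-\dim U_C(\rr ,d)=g$.\ Since $\cF_E$ is connected for every stable $E$ (Section~\ref{se21}) and a connected smooth scheme is irreducible, it follows that for general $E$ the scheme $\cF_E$ is smooth, irreducible, of dimension $g$.\ Everything apart from the scheme-theoretic comparison in the third paragraph is formal.
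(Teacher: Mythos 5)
Your proposal is correct and is essentially the paper's own argument: the paper also realizes the $\cF_E$ as fibers of a map built from the stable-pairs moduli space $G_C(\rr,d)$ (it uses the fiber product of $G_C(\rr,d)$ with the \'etale cover $SU_C(\rr,d)\times\Pic^0(C)\to U_C(\rr,d)$, projected to $SU_C(\rr,d)$, rather than your direct twisting map $G_C(\rr,d)\times\Pic^0(C)\to U_C(\rr,d)$, but the two bookkeepings are interchangeable) and then concludes by generic smoothness plus connectedness. The scheme-theoretic identification of the fiber with $\cF_E$ that you flag as the delicate point is simply asserted in the paper, so your write-up is if anything more scrupulous on that point.
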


\begin{proof}
 The tensor product map
$$SU_C(\rr ,d)\times \Pic^0(C)\lra U_C(\rr ,d)$$
is \'etale Galois.\ Since $ G_C(\rr ,d)$ is smooth of dimension $\rr ^2(g-1)+1$,  the fiber product 
$$(SU_C(\rr ,d)\times \Pic^0(C))\times_{U_C(\rr ,d)} G_C(\rr ,d)$$
is also smooth of dimension $\rr ^2(g-1)+1$. It maps onto  $SU_C(\rr ,d)$ by projection and the fiber of a point $[E]$ is exactly $\cF_E$.\  By generic smoothness,~$\cF_E$ is smooth  of dimension $g$ when $[E]$ is general.\ Being connected, as we saw above, it is also irreducible.
\end{proof}

\begin{rema}\label{r23}\upshape
  Let $C$ be any smooth projective curve of genus $g\ge 2$.\ For every $\rr \ge g+1$, one can explicitly construct    stable vector bundles $E$ on $C$ of rank $\rr $  and degree $\rr (g-1)+ 1$ for which~$\cF_E$ is reducible, of dimension $\ge \rr -1$.\ When $C$ is hyperelliptic, one can even construct such an $E$   of rank 
  $\rr \ge 3$.\ So the natural extension of Conjecture~\ref{c2} fails in ranks $\ge 3$.
\end{rema}

\section{Proof of Theorem~\ref{mth}}\label{s3}

In this section,  we fix a stable  vector bundle $E$  of  rank~$2$ and degree~$2g-1$ on a smooth projective  curve~$C$ of genus $g$.\ The scheme $\cF_E$ was defined in Section~\ref{se21} and the Brill--Noether loci $B_\kk (E)\subset \Pic^0(C) $ in~\eqref{defbk}.\ The  first projection
\begin{equation*}
\pr_1\colon \cF_E \lra \Pic^0(C) 
\end{equation*}
is dominant, with fibers   $\P^{\kk }$ over $B_\kk (E)\smallsetminus B_{\kk +1}(E)$.\ A result of Raynaud (\cite[Proposition~1.6.2]{ray}) says that  $h^0(C,  E\otimes L) =1$ when  $[L]$ is general in $\Pic^0(C)$.\ In other words,
\begin{equation}\label{bo}
\codim_{\Pic^0(C)} (B_\kk (E) )>0
\end{equation}
for all $\kk \ge 1$.\ Thus, only one irreducible component of  $\cF_E$  dominates~$\Pic^0(C)$ via $\pr_1$ and it has the (expected) dimension $g$.\ Items (a) and (b) of Conjecture~\ref{c2} are then equivalent to the inequalities
\begin{equation}\label{bk}
\codim_{\Pic^0(C)} (B_\kk (E) )\ge \kk +1
\end{equation}
for all $\kk \ge 1$.\ 
The next lemma shows that it is enough to prove them for  $\kk =1$ and $\kk =2$.

\begin{lemm}\label{l31}
For each $\kk \ge 2$, there is a set-theoretic inclusion 
$$B_\kk (E)+C-C\subset B_{\kk -2}(E).$$
 In particular, when $\kk \ge 3$, one has $\dim(B_\kk (E))\le \dim( B_{\kk -2}(E))-2$.
\end{lemm}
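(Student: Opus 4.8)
The plan is to exhibit, for each $\kk\ge2$, a natural map that embeds $B_\kk(E)+C-C$ into $B_{\kk-2}(E)$ at the level of points, using the elementary fact that twisting $E$ by a line bundle of the form $\cO_C(p-q)$ drops the dimension of the space of global sections by at most $2$ when $\rk E=2$.

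First I would fix $[L]\in B_\kk(E)$ and points $p,q\in C$, and consider the line bundle $L' \coloneqq L(p-q)\in\Pic^0(C)$; the goal is to show $[L']\in B_{\kk-2}(E)$, i.e.\ $h^0(C,E\otimes L')\ge \kk-1$. The two short exact sequences
\begin{equation*}
0\lra E\otimes L(-q) \lra E\otimes L \lra (E\otimes L)\vert_q \lra 0,
\qquad
0\lra E\otimes L(p-q)\lra E\otimes L(p) \lra (E\otimes L(p))\vert_p \lra 0
\end{equation*}
show, on passing to global sections, that $h^0(C,E\otimes L(-q))\ge h^0(C,E\otimes L)-2 = \kk-1$ (since the fiber $(E\otimes L)\vert_q$ has dimension $\rk E = 2$), and then $h^0(C,E\otimes L(p-q))\ge h^0(C,E\otimes L(-q))\ge \kk-1$. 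Hence $[L(p-q)]\in B_{\kk-2}(E)$. As $p,q$ range over $C$, the classes $[L(p-q)]$ sweep out $[L]+C-C$, and letting $[L]$ range over $B_\kk(E)$ gives the claimed set-theoretic inclusion $B_\kk(E)+C-C\subset B_{\kk-2}(E)$.

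For the dimension statement when $\kk\ge3$, I would argue as follows. The difference map $C\times C\to\Pic^0(C)$, $(p,q)\mapsto[\cO_C(p-q)]$, has two-dimensional image (it is the surface $C-C$, which is genuinely two-dimensional for $g\ge2$), and the addition map $B_\kk(E)\times(C-C)\to\Pic^0(C)$ has image inside $B_{\kk-2}(E)$. If $\dim B_{\kk-2}(E) > \dim B_\kk(E)$ fails to hold by a margin of at least $2$, I want to derive a contradiction: the point is that $B_\kk(E)+C-C$ has dimension at least $\dim B_\kk(E)+2$ unless the addition map $B_\kk(E)\times C\times C\to\Pic^0(C)$ has positive-dimensional fibers over a generic point of the image, which would force, after a standard translation argument, that $B_\kk(E)$ is stable under translation by a positive-dimensional abelian subvariety $A\subset\Pic^0(C)$ — and since the curve $C$ generates $\Pic^0(C)$ (as $g\ge2$, so $C-C$ is not contained in any proper abelian subvariety), the relevant subvariety cannot absorb the two extra dimensions. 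Therefore $\dim(B_\kk(E)+C-C)\ge\dim B_\kk(E)+2$, and combined with the inclusion into $B_{\kk-2}(E)$ this gives $\dim B_{\kk-2}(E)\ge\dim B_\kk(E)+2$.

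The main obstacle is the dimension-count in the last step: the clean inequality $\dim(X+C-C)\ge\dim X+2$ is only literally true once one rules out that the sum collapses, which is exactly the translation-invariance phenomenon above. I expect to handle this by invoking that $B_\kk(E)$ is a proper closed subset of $\Pic^0(C)$ (which follows from \eqref{bo}) together with the fact that the only closed subsets of an abelian variety invariant under translation by a sub-abelian-variety $A$ and by the full curve $C$ are either empty or everything — and $B_\kk(E)$ is neither. If one prefers to avoid the invariance discussion entirely, an alternative is to note that it suffices to bound $\dim B_\kk(E)$ for the purposes of Theorem~\ref{mth}, so one only needs the inclusion together with the elementary bound $\dim(X+S)\ge\dim S$ applied iteratively; but the stated form with the explicit drop by $2$ is cleanest via the invariance argument sketched above.
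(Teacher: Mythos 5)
Your proof of the set-theoretic inclusion is correct and is exactly the paper's argument: twisting by $\cO_C(-q)$ costs at most $\rk(E)=2$ sections and twisting by $\cO_C(p)$ costs none, so $h^0(C,E\otimes L(p-q))\ge h^0(C,E\otimes L)-2\ge\kk-1$.

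The dimension statement is where there is a genuine gap. The paper applies \cite[corollaire~2.7]{deb} to an irreducible component $B$ of $B_\kk(E)$ of maximal dimension: since the curve $C-p$ generates $\Pic^0(C)$, either $B+C-C=\Pic^0(C)$ or $\dim(B+C-C)=\dim(B)+2$, and the first alternative is excluded because it would force $B_{\kk-2}(E)=\Pic^0(C)$, contradicting~\eqref{bo} --- this is precisely where the hypothesis $\kk\ge3$ (so that $\kk-2\ge1$) is used, and your argument never uses it. Your sketch instead asserts that a collapse of the sum forces $B_\kk(E)$ to be stable under translation by a positive-dimensional abelian subvariety, to be contradicted by properness of $B_\kk(E)$. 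That is not how the degenerate case goes: positive-dimensional generic fibers of $B\times C\times C\to\Pic^0(C)$ do not directly yield invariance of $B_\kk(E)$ itself under an abelian subvariety, ``invariance under translation by the full curve $C$'' is not a meaningful group-theoretic condition, and in any case it is the properness of $B_{\kk-2}(E)$, not of $B_\kk(E)$, that rules out the degenerate alternative. Your proposed fallback ($\dim(X+S)\ge\dim S$ iterated) only yields $\dim B_{\kk-2}(E)\ge2$, which is useless here. If you want to avoid the citation, the elementary repair is the two-step argument: for an irreducible closed $X\subsetneq\Pic^0(C)$, if $\dim(X+C)=\dim(X)$ then the irreducible variety $X+C$ contains, hence equals, each translate $X+p$, so $X$ is invariant under the group generated by $C-C$, namely all of $\Pic^0(C)$ --- absurd; applying this first to $B$ and then to $B+C$ gives the dichotomy above.
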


\begin{proof}
The inclusion is easy: if $[L]\in B_\kk (E)$, one has $h^0(C,E\otimes L)\ge \kk +1$, hence, for all $p,q\in C$, we obtain $h^0(C,E\otimes L(p-q))\ge h^0(C,E\otimes L(-q))\ge \kk -1$ (since $E$ has rank $2$).\ In other words, $B_\kk (E)+C-C$ is contained in $ B_{\kk -2}(E)$.

When $\kk\ge 3$, the inequality $\dim(B_\kk (E))\le \dim( B_{\kk -2}(E))-2$ then follows from the fact (proved in \cite[corollaire~2.7]{deb}) that since the curve $C-p$ generates $\Pic^0(C)$, either the sum $B_\kk (E)+C-C$, hence also $B_{\kk -2}(E)$, is   equal to $\Pic^0(C)$ (which, since $\kk \ge3$, is impossible by~\eqref{bo}), or it has dimension $\dim(B_\kk (E))+2$. 
\end{proof}

We now treat the case $\kk =1$.

 \begin{lemm}\label{l32}
The scheme $B_1(E)$ is nonempty and has everywhere codimension $2$ in $ \Pic^0(C)$.\ It is  Cohen--Macaulay, with class $2\theta^2$.
\end{lemm}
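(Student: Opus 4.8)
The plan is to combine the general Brill--Noether machinery recalled in Section~\ref{bnl} with the special structure of rank-$2$ bundles, using an auxiliary Serre-type extension to force the codimension to be exactly $2$ everywhere. Recall that $B_1(E)$ is the degeneracy locus where the bundle map $\alpha\colon\cF\to\cG$ of ranks $m+1$ and $m$ (with $m=\rr(g-1)=g-1$) drops rank by at least $1$, so it is everywhere of codimension $\le 1\cdot 2=2$ in $\Pic^0(C)$, and by \eqref{bo} it is \emph{not} all of $\Pic^0(C)$; moreover Ghione--Lazarsfeld give that it is nonempty since $1\cdot 2=2\le g$. The content of the lemma is therefore the \emph{lower} bound: $\codim_{\Pic^0(C)}B_1(E)\ge 2$, i.e. $B_1(E)$ contains no divisorial component; once this is known, the general theory recalled in Section~\ref{bnl} immediately yields that $B_1(E)$ is Cohen--Macaulay with cohomology class $\bigl(\prod_{i=0}^{1}\frac{i!}{(1+i)!}\bigr)\rr^{2}\theta^{2}=\frac12\cdot 4\cdot\theta^2=2\theta^2$.

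The key step is thus to rule out a codimension-$1$ component of $B_1(E)$. Suppose $[L]\in\Pic^0(C)$ with $h^0(C,E\otimes L)\ge 2$. Since $E$ has rank $2$ and degree $2g-1$, Riemann--Roch gives $\chi(E\otimes L)=1$, so $h^1(E\otimes L)=h^0(E\otimes L)-1\ge 1$, and by Serre duality $\Hom(L,\omega_C\otimes E^\vee)\neq 0$; since $\omega_C\otimes E^\vee$ is stable of slope $(2g-2)-\tfrac{2g-1}{2}=\tfrac{2g-3}{2}>0$, any nonzero such map is injective with torsion-free cokernel, i.e. we get a nonzero section $s$ of $E\otimes L$ vanishing on an effective divisor $D$ of some degree $e\ge 0$, fitting into $0\to \cO_C(D)\to E\otimes L\to M\to 0$ with $M$ a line bundle of degree $2g-1-e$. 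Having a \emph{second} independent section of $E\otimes L$ forces either a second section of $\cO_C(D)$ (so $e\ge 1$, with $D$ moving in a pencil) or a nonzero section of $M$. I would parametrize the locus of $([L],[s])$ producing such a sub-line-bundle $\cO_C(D)\subset E\otimes L$: the pair $(D, \text{the extension class in }\Ext^1(M,\cO_C(D)))$ together with the isomorphism $\det = \cO_C(D)\otimes M \cong \det E\otimes L^{\otimes 2}$ pins down $L$ (up to the finite ambiguity of a square root once $D$, $M$ are fixed). A dimension count on this incidence variety --- ranging $e$, the divisor $D\in\Sym^e C$, and the extension --- together with stability of $E$ (which bounds $e$: a destabilizing inclusion $\cO_C(D)(-L)\hookrightarrow E$ of too-large degree is excluded), should show the image in $\Pic^0(C)$ has dimension $\le g-2$ unless $h^0(E\otimes L)=1$ generically on it. Here stability of $E$ is what prevents $\deg(\cO_C(D)\otimes L^{-1})=e-\deg L = e$ from being $\ge g$, which is exactly what keeps the Brill--Noether count under control.

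The step I expect to be the main obstacle is this dimension estimate for the locus of $[L]$ admitting a rank-one subsheaf of $E\otimes L$ with two sections: one must carefully separate the two mechanisms (a moving pencil inside $\cO_C(D)$ versus one section from each of the sub- and quotient line bundle), and in the second case control $\Ext^1(M,\cO_C(D))$ and how the section of $M$ lifts, all while tracking the constraint imposed by $\det(E\otimes L)=\det E\otimes L^{\otimes 2}$ on which $L$ actually occur. I would handle it by fixing $e=\deg D$ and stratifying: for each $e$ the relevant space of triples $(D,M,\text{class})$ has computable dimension, the map to $\Pic^0(C)$ is finite-to-one onto its image (square-root ambiguity), and stability of $E$ plus $\deg E=2g-1$ forces $0\le e\le g-1$ with the extremal strata contributing exactly a codimension-$2$ locus and the others less. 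It is plausible that Brivio--Verra's argument (cited as the provenance of this lemma) is precisely such an analysis via secant loci of the curve embedded by $|M|$; if so, I would cite it, but the extension-theoretic bookkeeping above is the self-contained route I would otherwise pursue.
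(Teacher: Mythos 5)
Your overall architecture is exactly the paper's: the only real content is the lower bound $\codim_{\Pic^0(C)}B_1(E)\ge 2$, after which the Ghione--Lazarsfeld facts recalled in Section~\ref{bnl} give Cohen--Macaulayness and the class $\bigl(\prod_{i=0}^{1}\tfrac{i!}{(1+i)!}\bigr)2^{2}\theta^{2}=2\theta^{2}$, and nonemptiness comes from $1\cdot 2\le g$. For the codimension bound the paper does precisely what you offer as your fallback, namely cite Brivio--Verra \cite[Theorem~3.5.1]{bve}, so on that route your proposal coincides with the paper's proof. Be aware, though, that your ``self-contained'' alternative is not yet a proof: the decisive dimension estimate for the locus of $[L]$ admitting a sub-line-bundle of $E\otimes L$ with the required sections is only asserted (``should show''), and the stratification by $e=\deg D$, the control of the extension classes, and the square-root ambiguity in recovering $L$ are all left as a plan rather than carried out; if you do not invoke \cite{bve}, that entire analysis must be supplied.
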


\begin{proof}
The inequality~\eqref{bk} for $\kk =1$ was   proved  in  \cite[Theorem~3.5.1]{bve} (their locus $E_\xi$ is   our~$B_1(\xi)$).\ The rest of the statement then follows from Remark~\ref{bnl}.
\end{proof}

The next lemma deals with the remaining case $\kk =2$.

 \begin{lemm}\label{l33}
One has $\codim_{\Pic^0(C)} B_2(E) \ge 3$.
\end{lemm}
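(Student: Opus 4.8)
The plan is to reduce to the case $\kk=1$, already settled in Lemma~\ref{l32}, using Serre duality together with a ``subtract a point'' trick.\ The relevant numerical coincidence is that removing a single point lowers $h^0$ of $E\otimes L$ by at most $\rk E=2$, which is precisely the room one has when $\kk=2$ (and is why the cases $\kk\ge 3$ were disposed of separately via Lemma~\ref{l31}).\ Accordingly, I would introduce the locus
$$\Theta_E\coloneqq\{[N]\in\Pic^{-1}(C)\mid h^0(C,E\otimes N)\ge 1\}$$
and record the set-theoretic inclusion $B_2(E)-C\subseteq\Theta_E$: if $h^0(C,E\otimes L)\ge 3$ then, for every $p\in C$, the exact sequence $0\to E\otimes L(-p)\to E\otimes L\to(E\otimes L)|_p\to 0$ gives $h^0(C,E\otimes L(-p))\ge 3-2=1$, hence $[L]-[p]\in\Theta_E$.

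Next I would show that $\dim\Theta_E=g-2$.\ Since $\chi(C,E\otimes N)=-1$ for $\deg N=-1$, the condition $h^0(E\otimes N)\ge 1$ is equivalent to $h^1(E\otimes N)\ge 2$, that is, by Serre duality, to $h^0(C,\omega_C\otimes E^\vee\otimes N^\vee)\ge 2$.\ Now $\omega_C\otimes E^\vee$ is stable of rank $2$ and degree $2g-3$; fixing a point $p_0\in C$, the isomorphism $[N]\mapsto[N^\vee(-p_0)]$ from $\Pic^{-1}(C)$ onto $\Pic^0(C)$ carries $\Theta_E$ onto $B_1(E_1)$, where $E_1\coloneqq\omega_C\otimes E^\vee(p_0)$ is stable of rank $2$ and degree $2g-1$.\ Applying Lemma~\ref{l32} to $E_1$, the locus $B_1(E_1)$ has pure codimension $2$ in $\Pic^0(C)$, whence $\dim\Theta_E=g-2$.

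To finish, I would take an irreducible component $Z$ of $B_2(E)$ — proper in $\Pic^0(C)$ by~\eqref{bo} — and prove that $\dim(Z-C)=\dim Z+1$.\ Were this false, the morphism $Z\times C\to\Pic^{-1}(C)$, $([L],p)\mapsto[L]-[p]$, would have image $Z-C$ of dimension $\le\dim Z<\dim(Z\times C)$, so all its fibres would have dimension $\ge 1$; as each fibre is a closed subset of a fibre $\{\ast\}\times C$ of the projection to $C$, it would equal $\{\ast\}\times C$, which forces $Z$ to contain a translate of the curve $C$ through each of its general points.\ Letting the point vary, $Z$ becomes stable under translation by $C-C$, hence — $C-p_0$ generating $\Pic^0(C)$ — under every translation, so $Z=\Pic^0(C)$, a contradiction (this is the one-sided analogue of the fact about $B_\kk(E)+C-C$ used in the proof of Lemma~\ref{l31}).\ Combining $\dim(Z-C)=\dim Z+1$ with $Z-C\subseteq\Theta_E$ gives $\dim Z+1\le\dim\Theta_E=g-2$, i.e.\ $\dim Z\le g-3$, so $\codim_{\Pic^0(C)}B_2(E)\ge 3$.\ In this scheme the substantive content is imported from Lemma~\ref{l32} (the theorem of Brivio and Verra); within the present argument the only points demanding attention are the Serre-duality and twisting bookkeeping — one must check that $E_1$ again has rank $2$ and degree $2(g-1)+1$, so that Lemma~\ref{l32} applies to it verbatim — and the elementary dimension estimate for $Z-C$, and I do not foresee any genuine obstacle beyond these.
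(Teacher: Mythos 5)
Your proof is correct, and it is a genuinely different — and considerably more economical — route than the paper's. The paper argues by contradiction: from a $(g-2)$-dimensional component $B$ of $B_2(E)$ it builds the hypersurface $H=B+p_0-C$, invokes Mumford--Kempf to write $H=\pi^*H'$ for an abelian quotient $\pi\colon\Pic^0(C)\to Q$, bounds the degree $e$ of the sum map $H\times C\to\Pic^0(C)$ from below by $\dim(Q)$ via intersection theory on abelian varieties, shows $B$ is invariant under $(\Ker\pi)^0$, and finally contradicts this with the secant-variety bound of Brivio--Verra (\cite[Proposition~3.4.1]{bve}). You instead exploit the numerical coincidence $\chi(C,E\otimes N)=-1$ for $\deg N=-1$: Serre duality converts the degree-$(-1)$ locus $\Theta_E=\{h^0(E\otimes N)\ge 1\}$ into $B_1\bigl(\omega_C\otimes E^\vee(p_0)\bigr)$ for a bundle that is again stable of rank $2$ and degree $2g-1$, so Lemma~\ref{l32} gives $\dim\Theta_E=g-2$ on the nose; combined with $B_2(E)-C\subseteq\Theta_E$ and the standard fact that $\dim(Z-C)=\dim Z+1$ for any irreducible $Z\subsetneq\Pic^0(C)$, this yields $\codim B_2(E)\ge 3$ directly. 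Both arguments ultimately rest on Brivio--Verra, but yours uses only their Theorem~3.5.1 (already packaged as Lemma~\ref{l32}) and avoids Mumford--Kempf and the degree estimates entirely. One expository slip: the fibre of $([L],p)\mapsto[L]-[p]$ over $[N]$ is not ``a closed subset of a fibre $\{\ast\}\times C$ of the projection to $C$'' — it is the set $\{([N(p)],p):[N(p)]\in Z\}$, which projects injectively onto a closed subset of $C$ and hence equals such a graph over all of $C$ when positive-dimensional; the conclusion you draw ($N+C\subseteq Z$, whence $Z+C-C\subseteq Z$ and $Z=\Pic^0(C)$) is nevertheless correct, and is exactly the dichotomy from \cite[corollaire~2.7]{deb} already used in Lemma~\ref{l31}.
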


\begin{proof}
Assume to the contrary that  $B_2(E)$ contains an irreducible subvariety $B$ of dimension~$g-2$.\ Fix $p_0\in C$.\ As explained in the proof of Lemma~\ref{l31}, since the curve~$C-p_0$ generates~$\Pic^0(C)$, the image of the  difference map 
$$  B\times C  \lra \Pic^0(C)\ ,\qquad  ([L],q)\longmapsto [L(p_0-q)]
$$ is an irreducible hypersurface $H\subset \Pic^0(C)$.\ For the same reason, the sum map 
$$   H\times C  \to \Pic^0(C)\ ,\qquad  ([N],p)\longmapsto [N(p-p_0)]$$
 is generically finite, say of degree $e$.\ For~$[M]$ general in $\Pic^0(C) $, one can therefore write 
$$M=N_1(p_1-p_0)=\cdots= N_e(p_e-p_0)$$
with $[N_1],\dots,[N_e]\in H$, and $p_1,\dots,p_e\in C$ pairwise distinct, and in turn $N_i = L_i(p_0-q_i)$, with $[L_1],\dots,[L_e]\in B$ and $ q_1,\dots,q_e\in C$.\ For each 
 $i\in\{1,\dots,e\}$, we have $M=L_i(p_i-q_i)$ and a chain of inequalities
$$h^0(C,  E\otimes M)\ge h^0(C,  E\otimes M(-p_i))=h^0(C,E\otimes L_i(-q_i))\ge h^0(C,E\otimes L_i)-2.
$$
Since $h^0(C,  E\otimes M) =1$  by Raynaud's result, and $h^0(C,  E\otimes L_i) \ge3 $ because $[L_i]\in B_2(E)$, all these numbers are equal to $1$ and 
  the ``unique'' section of $E\otimes M$ vanishes at the distinct points $p_1,\dots,p_e$, so that there is an injection
\begin{equation}\label{eqinj}
\cO_C( p_1+\dots+p_e)\lhra E\otimes M.
\end{equation}

 By \cite[Theorem~1, p.~95]{muke}, there is an abelian quotient $\pi\colon \Pic^0(C)\to Q$ and an ample  irreducible hypersurface $H'\subset Q$ such that $H=\pi^* H'$.\ By the projection formula, one then has
\begin{equation}\label{eq1}
e=H\cdot C=\pi^*H'\cdot C=H'\cdot \pi_*C \ge \dim(Q),
\end{equation}
where the   inequality holds because the curve $\pi(C)$ spans $Q$ and $H'$ is ample (see for example \cite[Section~4]{debd}).

The irreducible hypersurface  $H$  is stable by translation by the kernel $K$ of~$\pi$.\ For $b$ general in $B$ and~$p$ general in $C$, the difference map $\tau\colon B\times C  \to H$ is \'etale at $(b,p)$, hence  $T_{H,b+p}=T_{B,b}+T_{C,p}$ (all tangent spaces are translated in $T\coloneqq T_{\Pic^0(C),0}$) and this vector space contains $T_{K,0}$.\ This means that the image of $T_{K,0}$ in the 2-dimensional vector space  $T/T_{B,b}$ is contained in the image of the line $T_{C,p}$ for all $p$ general in~$C$.\ It must therefore be $0$, which means that~$T_{K,0}$ is contained in~$T_{B,b}$ for all $b$ general in $B$.\ By \cite[lemme~2.3]{deb}, this implies that~$B$ is stable by translation by the connected component $K^0$ of $0$ in $K$.

Fix again    $[M]$ general in $\Pic^0(C) $ and keep the notation of the beginning of the proof.\ For~$[N]$  general in  $K^0$, one has $M\otimes N=(L_i\otimes N)(p_i-q_i)$, with $[L_i\otimes N]\in B$ (because $B+K^0=B$, as we just proved), so, as  in~\eqref{eqinj}, there is an injection $\cO_C( p_1+\dots+p_e)\hra E\otimes M \otimes N$.\ In other words, we have
$$h^0(C, E\otimes M\otimes (N^\vee( p_1+\dots+p_e))^\vee)\ge 1
$$
for $N^\vee( p_1+\dots+p_e) $ general in $K^0+p_1+\dots+p_e\subset \Pic^e(C)$, hence   for all elements of that  subset.
By \cite[Proposition~3.4.1]{bve}, this implies  $\dim(K^0) \le g-1-e$, that is,
$$e\le g-1-\dim(K^0)=\dim(Q)-1 ,
$$ 
which contradicts the inequality~\eqref{eq1}.\end{proof}

The scheme $\cF_E$ is therefore irreducible of dimension $g$ and, by Proposition~\ref{prop31}, it is  a local complete intersection.\ It remains to show that it is nonsingular in codimension $1$.\ Let~$r,d $ be nonnegative integers.\ We set $M_E\coloneqq \omega_C\otimes \det(E)^\vee\in \Pic^{-1}(C)$ and 
$$B_{r,d}(E)\coloneqq \{ ([L],D) \in \Pic^0(C) \times C^{(d)}\mid h^0(C,M_E\otimes (L^\vee)^{\otimes 2}(D))>0,\ h^0(C,E\otimes L(-D))\ge r+1\}.
$$

By  \cite[Lemma~4.9]{kapo}, it is enough to show the following result.

\begin{lemm}\label{l34}
For  any $d\in\{1,\dots, g-1\}$ and $r\ge 0$, the locus
$ B_{r,d}(E) $
has dimension $\le g-r-2$. 
\end{lemm}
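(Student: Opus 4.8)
The plan is to realize $B_{r,d}(E)$ as a family, over a twisted Brill--Noether locus in $\Pic^{-d}(C)$, of varieties of dimension $\le d-1$, and then to bound that locus with the codimension estimates for the ordinary loci $B_k(E)$ proved above.\ First I would set $N\coloneqq L(-D)\in\Pic^{-d}(C)$ (so $L=N(D)$).\ Since $\det(E\otimes N)\otimes\cO_C(D)=\det(E)\otimes L^{\otimes2}(-D)$, Serre duality rewrites $h^0(C,M_E\otimes(L^\vee)^{\otimes2}(D))$ as $h^1(C,\det(E)\otimes L^{\otimes2}(-D))=h^1(C,\det(E\otimes N)\otimes\cO_C(D))$.\ Thus $([L],D)\mapsto(N,D)$ identifies $B_{r,d}(E)$ with
$$\bigl\{(N,D)\in\Pic^{-d}(C)\times C^{(d)}\ \bigm|\ h^0(C,E\otimes N)\ge r+1,\ \ h^1\bigl(C,\det(E\otimes N)\otimes\cO_C(D)\bigr)>0\bigr\}.$$

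Next I would bound the fibers of the first projection.\ For fixed $N$, the line bundle $Q_N\coloneqq\omega_C\otimes\det(E\otimes N)^\vee$ has degree $(2g-2)-(2g-1-2d)=2d-1$, and by Serre duality the fiber over $N$ is $\{D\in C^{(d)}\mid h^0(C,Q_N(-D))>0\}$, that is, the set of degree-$d$ subdivisors of the divisors of $|Q_N|$.\ Since each effective divisor has only finitely many degree-$d$ subdivisors, this fiber has dimension $\le\dim|Q_N|=h^0(C,Q_N)-1=h^1(C,\det(E\otimes N))-1$.\ As $\det(E\otimes N)$ has degree $2g-1-2d\le2g-3$, Riemann--Roch together with Clifford's inequality forces $h^1(C,\det(E\otimes N))\le d$, so every nonempty fiber has dimension $\le d-1$.\ Hence $\dim B_{r,d}(E)\le\dim\Sigma_r+d-1$, where $\Sigma_r\coloneqq\{N\in\Pic^{-d}(C)\mid h^0(C,E\otimes N)\ge r+1\}$, and it remains to prove $\dim\Sigma_r\le g-d-r-1$.

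To bound $\Sigma_r$ I would push it up towards $\Pic^0(C)$.\ For $0\le j\le d$ put $\Sigma_r^{(j)}\coloneqq\{N\in\Pic^{-d+j}(C)\mid h^0(C,E\otimes N)\ge r+1\}$, so $\Sigma_r^{(0)}=\Sigma_r$, $\Sigma_r^{(d)}=B_r(E)$, and $\Sigma_r^{(j)}+C\subseteq\Sigma_r^{(j+1)}$ (viewing $C$ in $\Pic^1(C)$).\ For $j\le d-1$ a general line bundle of negative degree $-d+j$ twists $E$ into a bundle without sections, so $\Sigma_r^{(j)}\subsetneq\Pic^{-d+j}(C)$; since $C$ generates $\Pic^0(C)$, \cite[corollaire~2.7]{deb} then gives $\dim(\Sigma_r^{(j)}+C)=\dim\Sigma_r^{(j)}+1$.\ Iterating these inequalities, I obtain $\dim\Sigma_r+d\le\dim B_r(E)$ when $r\ge1$, and $\dim\Sigma_0+(d-1)\le\dim\{N\in\Pic^{-1}(C)\mid h^0(C,E\otimes N)>0\}$ when $r=0$.\ Now Lemmas~\ref{l31}, \ref{l32}, \ref{l33} give $\dim B_r(E)\le g-r-1$ for $r\ge1$, and \cite[Proposition~3.4.1]{bve} gives $\dim\{N\in\Pic^{-1}(C)\mid h^0(C,E\otimes N)>0\}\le g-2$ (the case already used in the proof of Lemma~\ref{l33}); either way $\dim\Sigma_r\le g-d-r-1$, whence $\dim B_{r,d}(E)\le g-r-2$.

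The conceptual content is all in the first two steps: once the reformulation and the fiber estimate are in place, the statement follows formally from the established codimension bounds for $B_k(E)$ and from the Brivio--Verra estimate, the passage between the various Picard degrees being the elementary translation argument of \cite[corollaire~2.7]{deb}.\ The step I expect to demand the most care is the fiber-dimension bound over $\Pic^{-d}(C)$: one must check both that the locus of admissible $D$ has dimension at most $\dim|Q_N|$ (each divisor of $|Q_N|$ having only finitely many degree-$d$ subdivisors) and that Clifford's inequality caps $h^1(C,\det(E\otimes N))$ at $d$ --- this is precisely what prevents the fibers from growing with $d$.
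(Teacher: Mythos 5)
Your proposal is correct and follows essentially the same route as the paper's proof: after the reparametrization $([L],D)\mapsto (L(-D),D)$ (the paper uses the dual, $M=L^\vee(D)\in\Pic^d(C)$), you bound the fibers of the projection to the Picard variety by $d-1$ via Clifford's theorem applied to the degree-$(2d-1)$ line bundle $Q_N=M_E\otimes M^{\otimes2}$, and you bound the base by pushing it into $B_r(E)$ (or, for $r=0$, invoking \cite[Proposition~3.4.1]{bve}) together with the translation argument of \cite[corollaire~2.7]{deb}. The only cosmetic differences are the choice of Picard degree and that you bound the fiber directly by $\dim|Q_N|$ rather than by showing it cannot be all of $C^{(d)}$.
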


\begin{proof}
The map $([L],D) \mapsto ([L^\vee(D)],D)$ induces an     isomorphism between $B_{r,d}(E)$ and
$$B'_{r,d}(E)\coloneqq \{ ([M],D) \in \Pic^{d}(C) \times C^{(d)}\mid  h^0(C,M_E\otimes  M^{\otimes {2}}(-D))>0,  h^0(C,E\otimes M^\vee)\ge r+1\}.$$
Let $ B''_{r,d}(E)\subset \Pic^{d}(C)$ be the image of the  first projection of $B'_{r,d}(E)$.\  

If $r=0$, \cite[Proposition~3.4.1]{bve} implies $\dim(B''_{r,d}(E))\le g-1-d=g-r-1-d$.\ 

If $r\ge1$, the  image of the map   
$$B''_{r,d}(E)\times C^{(d)}\to \Pic^0(C),\qquad ([M],D')\mapsto [M^\vee(D')]$$  is contained in~$B_{r}(E)$, hence, since $r\ge 1$, it has dimension $\le g-r-1$ by~\eqref{bk}.\ As explained in the proof of Lemma~\ref{l31}, this implies again $\dim ( B''_{r,d}(E))\le g-r-1-d$.\ 

Finally, assume that the fiber   of some~$[M]\in B''_{r,d}(E)$ for the projection $B'_{r,d}(E)\to B''_{r,d}(E)$ has dimension $d$.\ It is then equal to $ C^{(d)}$.\ This means that 
 $$\forall  D\in C^{(d)}\qquad h^0(C,M_E\otimes M^{\otimes {2}}(-D))>0
$$
 hence, $h^0(C,M_E\otimes M^{\otimes 2} ) \ge d+1$.\ But $M_E\otimes M^{\otimes 2}$ has degree $ 2d-1\le 2g-3$, so this contradicts Clifford's theorem.\ Therefore, all fibers have dimension $\le d-1$  hence, $\dim (B_{r,d}(E)) \le g-r-1-d+d-1=g-r-2 $, as desired.
 \end{proof}

When $g=2$, it is   easy to deduce Conjecture~\ref{c2} from Theorem~\ref{mth}  (this case of the conjecture was already proved in \cite[Theorem~4.20]{kapo}, with a slightly different proof). 
 
 \begin{coro}
Conjecture~\ref{c2} holds when $g=2$.
\end{coro}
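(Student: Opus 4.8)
The plan is to deduce Conjecture~\ref{c2} for $g=2$ from Theorem~\ref{mth}, so the only point left is to establish rational singularities (item~(c)), since irreducibility and $\dim(\cF_E)=g=2$ are already part of the conclusion of Theorem~\ref{mth}. The starting observation is that Theorem~\ref{mth} tells us $\cF_E$ is a normal local complete intersection surface; for such a surface, having rational singularities is equivalent to having at worst Du~Val (canonical) singularities, and in fact for a normal lci surface one only needs to control the singularities at finitely many points. So I would first locate the singular locus of $\cF_E$.

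First I would recall from Section~\ref{s3} that $\pr_1\colon\cF_E\to\Pic^0(C)$ has fiber $\P^\kk$ over $B_\kk(E)\smallsetminus B_{\kk+1}(E)$, and that by Lemma~\ref{l33} we have $\codim_{\Pic^0(C)}B_2(E)\ge 3$, which when $g=2$ forces $B_2(E)=\varnothing$. By Lemma~\ref{l32}, $B_1(E)$ has codimension $2$ in the surface $\Pic^0(C)$, hence is a finite set of points, and over each such point the fiber of $\pr_1$ is a $\P^1$. Thus $\cF_E$ is a surface obtained by "blowing up" information along finitely many points: away from $\pr_1^{-1}(B_1(E))$ the map $\pr_1$ is an isomorphism onto $\Pic^0(C)\smallsetminus B_1(E)$, so $\cF_E$ is smooth there, and the possible singularities all lie on the finitely many $\P^1$'s sitting over the points of $B_1(E)$.

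Next I would analyze $\cF_E$ locally near one of those $\P^1$-fibers. Over a point $[L_0]\in B_1(E)$ (with $[L_0]\notin B_2(E)$, automatic here), the local model from the proof of Proposition~\ref{prop31} is the scheme $\{u(x)=0\}$ cut out in (an open of) $\Hom(\k^{m+1},\k^m)\times\P^m$ with $m=\rr(g-1)=2$, pulled back along a map $U\to\Hom(\k^3,\k^2)$ from a neighborhood $U$ of $[L_0]$ in the surface $\Pic^0(C)$; the point $[L_0]$ maps to a matrix of corank $1$ (since $[L_0]\notin B_2(E)$). The key computation is to show that the resulting surface $\cF_E$ has a rational singularity along (or transverse to) the $\P^1$: concretely, I expect that after the standard identification this local model is the total space of (a line bundle related to) the restriction of $\mathcal{O}_{\P^1}$, i.e.\ that $\cF_E\to\Pic^0(C)$ is, near $[L_0]$, a small resolution-type contraction or at worst produces an $A_n$ singularity, and that one can read off rationality directly. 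Alternatively — and this is probably cleaner — one invokes that a normal lci surface is Cohen--Macaulay, computes its dualizing sheaf via adjunction on $\P(\cF)$, checks $K_{\cF_E}$ is trivial (or numerically trivial) along the exceptional $\P^1$'s, and concludes the singularities are canonical, hence rational by Elkik's theorem.

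The main obstacle will be the last local computation: controlling the singularity type of $\cF_E$ along the $\P^1$-fibers over $B_1(E)$ precisely enough to certify rationality. The favorable structural facts are that $B_2(E)=\varnothing$ in genus $2$, so the corank never jumps to $2$ and the local model stays the generic-rank-one degeneracy locus; and that $\pr_1$ is birational with fibers that are either points or $\P^1$'s, so by the theorem on the image of a birational morphism from a normal variety (or simply because $\Pic^0(C)$ is smooth) one is in the classical situation of a surface admitting a birational morphism to a smooth surface, where every exceptional curve is a tree of rational curves and the pushforward $R^i\pr_{1*}\mathcal{O}_{\cF_E}=0$ for $i>0$ can be checked fiberwise — the fibers being $\P^1$'s or points, whose higher cohomology of $\mathcal{O}$ vanishes. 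This fiberwise vanishing of $R^i\pr_{1*}\mathcal{O}_{\cF_E}$, combined with normality and $\pr_1$ being a resolution-type morphism onto the smooth surface $\Pic^0(C)$, is exactly the statement that $\cF_E$ has rational singularities. So the cleanest route is: (1) from Theorem~\ref{mth}, $\cF_E$ is a normal lci surface; (2) from $g=2$, Lemma~\ref{l33} gives $B_2(E)=\varnothing$ and Lemma~\ref{l32} gives $B_1(E)$ finite; (3) hence $\pr_1\colon\cF_E\to\Pic^0(C)$ is a proper birational morphism onto a smooth surface, with all fibers of dimension $\le 1$ and connected; (4) such a morphism from a normal surface has $R^i\pr_{1*}\mathcal{O}_{\cF_E}=0$ for $i>0$ (by formal functions / the fibers being $\P^1$ or a point), which combined with normality and Cohen--Macaulayness yields rational singularities; (5) since $\Pic^0(C)$ is smooth, this shows $\cF_E$ itself has rational singularities, proving item~(c) and completing Conjecture~\ref{c2} for $g=2$.
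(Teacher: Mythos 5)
Your reduction to item~(c) and your identification of the geometric situation when $g=2$ ($B_2(E)=\varnothing$ by Lemma~\ref{l33}, $B_1(E)$ finite by Lemma~\ref{l32}, and $\pr_1\colon\cF_E\to\Pic^0(C)$ a proper birational morphism onto a smooth surface with fibers points or $\P^1$'s) agree with the paper, but from there you take a genuinely different route. The paper makes the local structure explicit: it shows that $\cF_E$ is the blowup of $\Pic^0(C)$ along the finite subscheme $B_1(E)$ (locally $ua(z)+vb(z)=0$), classifies the possible singularities along each exceptional $\P^1_L$ according to the dimension of the Zariski tangent space of $B_1(E)$ at $[L]$, uses the normality furnished by Theorem~\ref{mth} to exclude the case where $\cF_E$ would be singular along all of $\P^1_L$, and concludes that the only singularities are of type $A_m$, hence rational. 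You instead want to invoke the general principle that a normal surface admitting a proper birational morphism onto a smooth surface has only rational (``sandwiched'') singularities. That principle is true and does give a cleaner, less computational proof.

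However, your justification of it in step~(4) has a genuine gap. Rational singularities of $\cF_E$ are defined by the vanishing of $R^ig_*\cO_{\widetilde{\cF}_E}$ for a resolution $g\colon\widetilde{\cF}_E\to\cF_E$; the morphism $\pr_1$ goes in the \emph{opposite} direction (from the singular surface to the smooth one), so the vanishing of $R^i\pr_{1*}\cO_{\cF_E}$ is not ``exactly the statement that $\cF_E$ has rational singularities.'' Moreover, deducing $R^1\pr_{1*}\cO_{\cF_E}=0$ ``fiberwise'' because the reduced fibers are $\P^1$'s or points is not valid: the theorem on formal functions requires controlling all infinitesimal thickenings of the fibers. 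The repair is the Leray spectral sequence: with $h\coloneqq\pr_1\circ g$ a birational morphism between smooth projective surfaces (hence a composite of point blowups, so $R^1h_*\cO=0$) and $g_*\cO_{\widetilde{\cF}_E}=\cO_{\cF_E}$ by normality, one gets an exact sequence $0\to R^1\pr_{1*}\cO_{\cF_E}\to R^1h_*\cO_{\widetilde{\cF}_E}\to \pr_{1*}R^1g_*\cO_{\widetilde{\cF}_E}\to R^2\pr_{1*}\cO_{\cF_E}$, whose last term vanishes because the fibers of $\pr_1$ are at most one-dimensional; since $R^1g_*\cO_{\widetilde{\cF}_E}$ is a skyscraper sheaf whose direct image under $\pr_1$ vanishes, it is zero, which is rational singularities. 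With that fix your argument is complete; what the paper's more hands-on route buys in exchange is the precise singularity type ($A_m$ with $m=\length_{[L]}(B_1(E))-1$), which your softer argument does not see.
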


\begin{proof}
 Let $C$ be a smooth projective curve of genus $  2$ and let $E$ be a stable vector bundle on~$C$ of rank~$2$  and degree $3$.\ By Theorem~\ref{mth}, the scheme $\cF_E$ is a normal irreducible surface and, as we saw in Section~\ref{s3}, $B_2(E)$ is empty and $B_1(E)$ is a finite subscheme of $\Pic^0(C)$ locally defined by two equations.\ A local analysis based on the proof of Proposition~\ref{prop31} shows that~$\cF_E$ is the blowup of the smooth surface $\Pic^0(C)$ along its finite subscheme $B_1(E)$.\footnote{More generally, for any $g\ge 2$, by~\cite[Proposition~4.1]{kapo} and~\eqref{bk},  the scheme $\cF_E$ is the blowup of~$\Pic^0(C)$ along its   codimension $2$ subscheme $B_1(E)$.}\ If~$B_1(E)$ is defined locally around a point $[L]$ by two equations $a(z)=b(z)=0$, this blowup is defined by $ua(z)+vb(z)=0$, where $(u,v)$ is in the exceptional curve $\P^1_L$.\ If $T_L$ is the  Zariski tangent space to   $B_1(E)$  at $[L]$, one sees that
\begin{itemize}
\item  either $\dim(T_L)=0$ (the   scheme   $B_1(E)$ is smooth   at $[L]$), in which case the surface $\cF_E$ is smooth   along $\P^1_L$;
\item or  $\dim(T_L)=1$, in which case  the surface~$\cF_E$ is singular at exactly one point of   $\P^1_L$, where it has a singularity of type $A_{m}$, where $m=\length_{[L]}(B_1(E))-1$;
\item or  $\dim(T_L)=2$, in which case  the surface  $\cF_E$ is singular along the whole of $\P^1_L$.
\end{itemize}
 Since $\cF_E$ is normal by Theorem~\ref{mth}, the last   case cannot happen and the conjecture is proved.
\end{proof}

\begin{rema}[Brill--Noether loci of maximal dimensions]\upshape
 The inequalities~\eqref{bk} are optimal: on any  hyperelliptic curve of genus at least $  2$  and on any curve  of genus 3, one can construct   a   stable vector bundle~$E$ of rank~$2$ and degree~$2g-1$  such that
$B_\kk (E)$ has dimension $  g-\kk -1$ for all $\kk \in\{1,\dots, g-1\}$.\ However, when  the Clifford index of $C$ is at least $2$ (in particular, one has~$g\ge5$),   the set~$B_{g-1}(E)$ is empty by \cite[Proposition~3.5]{lan}.
\end{rema}

\begin{rema}[Nonreduced Brill--Noether loci]\upshape
On any smooth projective curve $C$ of genus at least~$ 2$, one can construct stable   vector bundles $E$ of rank~$2$ and degree~$2g-1$
for which the scheme $B_1(E)$ has a nonreduced component (of codimension $2$ in $\Pic^0(C)$).\ The scheme  $\cF_E$ is then singular in codimension $2$.
\end{rema}

\end{document}